\newtheorem*{theorem*}{Theorem}
\newtheorem{theorem}{Theorem}[section]
\newtheorem{lemma}[theorem]{Lemma}
\theoremstyle{definition}
\newtheorem{definition}[theorem]{Definition}
\newtheorem{remark}[theorem]{Remark}
\newtheorem*{question*}{Question}
\newtheorem{problem}[theorem]{Problem}
\theoremstyle{plain}
\newcounter{thmintronumber}
\newtheorem{theoreminum}[thmintronumber]{Theorem}
\theoremstyle{definition}
\theoremstyle{plain}
\newcounter{theoremintro}
\newcounter{conjectureintro}
\newcounter{corollaryintro}
\newtheorem{theoremi}[theoremintro]{Theorem}
\newcommand{\Zb}{{\mathbb Z}}
\newcommand{\Nb}{{\mathbb N}}
\newcommand{\Rb}{{\mathbb R}}
\newcommand{\abs}[1]{\left|#1\right|}
\DeclarePairedDelimiter\ceil{\lceil}{\rceil}
\newcommand{\act}[1][]{\overset{#1}{\curvearrowright}}
\renewcommand{\phi}{\varphi}
\newcommand{\brak}[1]{\left(#1\right)}
\DeclareMathOperator{\id}{id}
\DeclareMathOperator{\Mod}{mod}
\title{Quantitative orbit equivalence for $\Zb$-odometers}
\author{Petr Naryshkin}
\address{Petr Naryshkin,
Institute for Advanced Study, 1 Einstein Dr, Princeton, NJ 08540, USA}
\email{penaryshkin@ias.edu}
\author{Spyridon Petrakos}
\address{Spyridon Petrakos,
Department of Mathematical Sciences, Chalmers University of Technology and University of Gothenburg, SE-412 96 Gothenburg, Sweden}
\email{petrakos@chalmers.se}
\date{\today}
\begin{document}

\begin{abstract}
We prove that any two $\Zb$-odometers are sub-$L^1$-orbit equivalent, greatly strengthening previous results and giving a definitive picture of quantitative orbit equivalence for these systems.
\end{abstract}

\maketitle

\section{Introduction}

Orbit equivalence between two probability measure preserving (p.m.p.) actions of countable groups is a natural notion that has been studied extensively in ergodic theory. When the acting groups are non-amenable, this relation is highly non-trivial, and studying it has led to a plethora of rigidity results, starting with the work of Zimmer in the early 1980s \cite{Zim80} and flourishing with the works of Furman \cite{Fur99a,Fur99b}, the development of Popa's deformation/rigidity theory \cite{Pop07}, Monod and Shalom's work on bounded cohomology \cite{MS06}, and works of many authors along these lines---not to mention parallel developments in the closely related theory of measure equivalence (see surveys \cite{Fur09,Gab11} for an overview of early results). 

This comes in stark contrast with the amenable side, where remarkable results of Dye \cite{Dye59,Dye63}, and Ornstein and Weiss \cite{OW80}, guarantee a total collapse of the theory: any two ergodic p.m.p. actions of countably infinite amenable groups are orbit equivalent. In addition, this (common) orbit equivalence class exactly remembers the amenability of the acting group in the free case \cite{Zim78}.

The situation becomes more interesting if one imposes additional conditions on the resulting cocycles, a theory broadly referred to as \emph{quantitative orbit equivalence}. Different degrees of rigidity can then emerge, as already witnessed in the following---by now classical---theorem of Belinskaya \cite{Bel68}.

\begin{theoreminum}\label{thm: Belinskaya}
If two ergodic p.m.p. transformations $T$ and $S$ on a standard probability space are integrably orbit equivalent, then $T$ and $S$ are flip-conjugate (i.e., either $T\cong S$ or $T\cong S^{-1}$). 
\end{theoreminum}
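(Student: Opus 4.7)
The plan is to reduce to a common model, determine the mean of the cocycle via the Birkhoff ergodic theorem, and then construct the conjugacy by solving a cohomological equation along Rokhlin towers.

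I would first pass to a common realization in which $T$ and $S$ are two ergodic p.m.p.\ automorphisms of the same space $(X, \mu)$ with coinciding orbit partitions, related by $Sx = T^{\alpha(x)}x$ for some integrable cocycle $\alpha: X \to \Zb$. To determine $\beta := \int \alpha\, d\mu$, I apply the Birkhoff ergodic theorem to the partial sums $\alpha_n(x) := \sum_{k=0}^{n-1}\alpha(S^k x)$, obtaining $\alpha_n(x)/n \to \beta$ a.e.\ as $|n| \to \infty$. Freeness of $T$ and $S$ together with the orbit-coincidence forces $n \mapsto \alpha_n(x)$ to be a bijection $\Zb \to \Zb$ for a.e.\ $x$. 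Comparing asymptotic densities then pins down $|\beta| = 1$: the $(2N+1)$-element set $\{\alpha_n(x) : |n| \leq N\}$ lies inside $[-(|\beta|+o(1))N, (|\beta|+o(1))N]$, forcing $|\beta| \geq 1$; conversely, every integer in $[-K, K]$ occurs as $\alpha_n(x)$ for some $n$ with $|n| \leq (1+o(1))K/|\beta|$, giving $|\beta| \leq 1$.

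Assuming without loss of generality that $\beta = 1$ (otherwise replace $S$ by $S^{-1}$, which is still integrably orbit equivalent to $T$), I look for a conjugacy of the form $\phi(x) = T^{f(x)}x$ for measurable $f: X \to \Zb$. The identity $\phi S = T\phi$ then reduces to the cohomological equation
\[
f \circ S - f = 1 - \alpha,
\]
whose right-hand side is an $L^1$, zero-mean, $\Zb$-valued cocycle for $S$. Solving this equation is the main obstacle, as such cocycles need not be coboundaries in general. I would invoke Atkinson's recurrence theorem: for $\psi := 1 - \alpha$ one has $\liminf_n |\psi_n(x)| = 0$ a.e., hence $\alpha_n(x) = n$ infinitely often in both time directions. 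Using the first-return time $\tau(x) := \min\{n \geq 1 : \alpha_n(x) = n\}$, the space $X$ splits into Rokhlin towers on each of which the bijection $k \mapsto \alpha_k(x)$ restricts to a finite permutation fixing the endpoints, giving a local solution to the cohomological equation. A measurable choice of base points for these towers then assembles the local solutions into a global $f$, and $\phi(x) := T^{f(x)}x$ is measure-preserving because it acts as a permutation of levels within each tower. This yields $T \cong S$ (or $T \cong S^{-1}$ in the other case), i.e., flip-conjugacy. The subtlest point I expect is the consistent, measurable assembly of the local permutations into a single cocycle-trivializing function $f$, which requires careful bookkeeping of the tower structure.
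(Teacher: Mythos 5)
The paper does not actually prove this statement: it is Belinskaya's classical theorem, quoted from \cite{Bel68} (a complete modern proof can be found in \cite{CJLMT23}), so your argument can only be judged on its own terms. The first half is correct and is the standard route: reduce to $Sx=T^{\alpha(x)}x$ on a common space, observe that $n\mapsto\alpha_n(x)$ is a bijection of $\Zb$ by freeness and orbit-coincidence, and combine this with Birkhoff to force $\int\alpha\,d\mu\in\{\pm1\}$. The reduction of flip-conjugacy to the cohomological equation $f\circ S-f=1-\alpha$ is also right, and once a measurable solution $f$ exists the map $\phi(x)=T^{f(x)}x$ does work: $\phi(S^nx)=T^{n+f(x)}x$ is a bijection of each orbit, so $\phi$ is injective, hence measure-preserving as an element of the full group, and intertwines $S$ with $T$.

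The genuine gap is in the solution of the cohomological equation. Atkinson's theorem gives recurrence of the mean-zero cocycle $\psi=1-\alpha$, but recurrence is necessary and very far from sufficient for a cocycle to be a coboundary (mean-zero integer-valued cocycles over irrational rotations are recurrent yet typically generate ergodic skew products). What recurrence does give you is that $R(x)=\{n:\alpha_n(x)=n\}$ is unbounded in both directions; however, the local solutions $k\mapsto k-\alpha_k(x)$ based at $x$ and at $S^nx$ agree if and only if $n\in R(x)$, so assembling them into a global $f$ requires a measurable choice, on almost every orbit, of one compatibility class of base points---exactly the kind of measurable selection that fails for an ergodic relation. (Separately, $\alpha_{\tau}(x)=\tau$ does not make $k\mapsto\alpha_k(x)$ a permutation of $\{0,\dots,\tau-1\}$, since intermediate values can escape the block; this particular slip is harmless, but the selection problem is not mere bookkeeping.) The standard repair is to write down a canonical solution directly: setting $A(x)=\{\alpha_n(x):n\ge0\}$, define
\[
f(x)=\#\bigl(\Zb_{\ge0}\setminus A(x)\bigr)-\#\bigl(A(x)\setminus\Zb_{\ge0}\bigr).
\]
Both sets are finite almost everywhere because $\alpha_n(x)/n\to1$ and $n\mapsto\alpha_n(x)$ is a bijection, $f$ is manifestly measurable, and from $A(Sx)=(A(x)\setminus\{0\})-\alpha(x)$ one computes $f(Sx)=f(x)+1-\alpha(x)$. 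This bypasses Atkinson's theorem and the tower construction entirely and completes your argument.
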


Integrable orbit equivalence is only one instance ($p=1$) of $L^p$-orbit equivalence, which itself is very prominent in the theory. Systematic interest in quantitative orbit equivalence is justified by more recent results showing that with appropriate conditions (not necessarily in terms of integrability; e.g., Shannon orbit equivalence) certain invariants must be preserved, including the entropy of the action \cite{Aus16b,KL21,KL24,BL24} and the F{\o}lner profile of the acting group (in particular, its growth) \cite{Aus16a,DKLMT22}. 

Despite the above, relatively little is known about concrete systems in terms of their various orbit equivalence classes\footnote{The term here is used loosely; contrary to what the name suggests, it is often unclear whether quantitative orbit equivalence of a given kind is transitive.\label{footnote}}. Even in seemingly tractable cases, only partial results are available (see Section~\ref{sec: prelim} for definitions and notation):

\begin{theoreminum}\label{thm: prev results}
Let $\Zb \act X$ be a free ergodic  p.m.p. action generated by $T \colon X \to X$ and let $\omega\colon\Zb_{\geq0}\to\Rb_{\geq0}$ be a sublinear function.
\begin{itemize}
    \item If there is some $n \ge 2$ such that $T^n$ is ergodic, then there exists an action $\Zb \act Y$ that is $\omega$-orbit equivalent to $\Zb \act X$ but not flip-conjugate to it \cite{CJLMT23}.
    \item If $\Zb \act X$ is any odometer, then there exists an action $\Zb \act Y$ that is $\omega$-orbit equivalent to $\Zb \act X$ but not flip-conjugate to it \cite{Cor25b}.
    \item Every $\Zb$-odometer is $L^{<1/3}$-orbit equivalent to the universal odometer \cite{KL24,Cor25a}.
\end{itemize}
\end{theoreminum}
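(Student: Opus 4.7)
The statement bundles three separate constructions of non-flip-conjugate orbit equivalent partners, so my plan is to indicate a strategy for each bullet. In all three cases the target $\Zb \act Y$ will share the $T$-orbit partition on $X$, so the task reduces to selecting an orbit cocycle $c \colon \Zb \times X \to \Zb$ and verifying that the new generator $S(x) = T^{c(1,x)}(x)$ meets the stated quantitative and non-rigidity requirements.

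For the first bullet, the ergodic power $T^n$ yields Rokhlin towers of arbitrary height. I would modify $T$ on selected rungs using small transpositions whose spatial distribution depends on the tower base in a way that perturbs an invariant (entropy, point spectrum, or the Koopman multiplicity) and thereby excludes flip-conjugacy. The tower heights can be tuned to a rate comparable with $\omega$, which ensures the cocycle displacement is dominated by $\omega$; ergodicity of $T^n$ is exactly what is used to spread the perturbations evenly across the space.

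For the second bullet one cannot invoke an ergodic power, so the substitute is the canonical Kakutani--Rokhlin structure of an odometer, i.e.\ an increasing sequence of partitions $X = \bigsqcup_{j=0}^{k_n - 1} T^j B_n$ with $k_n \to \infty$ and $B_n$ decreasing. I would modify $T$ on the top rung of each tower by a measurable permutation of $B_n$, varying these permutations between levels to ensure the resulting $S$ is not conjugate or anti-conjugate to $T$ (a counting argument on how many distinct odometers can arise from a prescribed supernatural number, or a direct entropy computation if the permutations are large enough, should suffice). Choosing the $k_n$ to match the growth of $\omega$ is what keeps the cocycle $\omega$-controlled.

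The third bullet is the quantitative heart. Matching an arbitrary odometer, determined by a supernatural number $\mathfrak{s}$, against the universal odometer (built from factorials) amounts to a base-change between two numeration systems, and the orbit cocycle is essentially the difference of positional representations. The $L^p$-norm estimate reduces to a geometric sum over refinement scales, and the exponent $1/3$ appears as the unique value balancing the refinement rate against the maximal displacement on each level. The main obstacle---and indeed the precise barrier that the paper's main theorem must circumvent to reach sub-$L^1$---is that a naive level-by-level matching saturates exactly at $p = 1/3$; going further should demand a more flexible alignment between the two towers, perhaps by permuting levels or by absorbing displacement into a well-chosen initial transformation, rather than a quantitative sharpening of the same estimate.
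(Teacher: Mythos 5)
This statement is Theorem~\ref{thm: prev results}, a survey of prior results; the paper does not prove it but cites \cite{CJLMT23}, \cite{Cor25b}, and \cite{KL24,Cor25a}, so there is no internal proof to compare against. Your proposal does correctly identify the common strategy of those works --- constructing a new generator $S = T^{c(1,\cdot)}$ in the full group of $T$ with the same orbits and a controlled cocycle --- which is exactly how the introduction of this paper characterizes them. But what you have written is a research plan, not a proof: in each bullet the actual construction is left unspecified (``small transpositions whose spatial distribution depends on the tower base,'' ``a measurable permutation of $B_n$, varying these permutations between levels''), and the two genuinely hard steps are asserted rather than carried out, namely (a) proving that the perturbed system is \emph{not} flip-conjugate to the original, and (b) the quantitative estimate that produces the exponent $1/3$.

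Two of your fallback arguments for (a) would not work as stated. For the second bullet, ``a counting argument on how many distinct odometers can arise from a prescribed supernatural number'' is vacuous, since odometers with the same supernatural number are isomorphic; the whole point of Correia's construction is that the resulting system $Y$ is generally \emph{not} an odometer (an ``odomutant''), so distinguishing it requires a different invariant. Likewise, using entropy as the separating invariant requires care, because entropy is preserved under integrable and Shannon orbit equivalence, so one must check that the chosen sublinear $\omega$ genuinely permits an entropy change; this is a nontrivial tension, not a free move. For the third bullet, the claim that $1/3$ is ``the unique value balancing the refinement rate against the maximal displacement'' is unsubstantiated, and the main theorem of this paper shows that $1/3$ is not an intrinsic barrier at all: the obstruction in \cite{KL24,Cor25a} is an artifact of the one-directional construction (building $S$ on $X$ without reference to the target system), which is precisely what the back-and-forth argument of Section~\ref{sec: main proof} is designed to avoid.
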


In particular, there has been no satisfactory answer to the following question, even for special classes of systems.

\begin{question*}\label{ques: main}
Given two systems $\Zb \act X$ and $\Zb \act Y$, what is the best integrability condition that an orbit equivalence between them can satisfy?
\end{question*}

The main result of this paper provides a complete answer to this question for odometers.

\begin{theoremi}[Theorem \ref{thm:main with omega}]\label{thm:main}
Any two $\Zb$-odometers are sub-$L^1$-orbit equivalent.
\end{theoremi}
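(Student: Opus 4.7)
The strategy is to build an orbit equivalence between the two $\Zb$-odometers $(X,T)$ and $(Y,S)$ as a limit of partial orbit equivalences defined level-by-level on their Rokhlin-tower partitions, with the refinement schedule tuned to the desired sublinear gauge. Writing $X=\varprojlim \Zb/a_k\Zb$ and $Y=\varprojlim \Zb/b_k\Zb$, one may pass to common refinements of the defining supernatural numbers so that at each stage both $X$ and $Y$ admit clopen Rokhlin towers of a common height $N_k$, with $N_k \mid N_{k+1}$ and the ratios $N_{k+1}/N_k$ free to be chosen arbitrarily large.

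First I would define, for each stage $k$, a partial orbit equivalence $\phi_k\colon X\to Y$ that matches the base of the $T$-tower of height $N_k$ to the base of the $S$-tower of height $N_k$ via a measure-preserving bijection, and then extends equivariantly up the floors; such a $\phi_k$ has trivial cocycle inside each tower column. To pass from $\phi_k$ to $\phi_{k+1}$, I would use the refinement from height $N_k$ to height $N_{k+1}$: on the bulk of the towers the new matching is chosen to agree with the old, while on a small ``boundary'' set $B_k \subseteq X$ the matching is rerouted and the cocycle suffers a jump of size at most $O(N_k)$. The limit $\phi=\lim \phi_k$ exists $\mu$-almost everywhere provided $\sum \mu(B_k)<\infty$ and is the candidate orbit equivalence.

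The cocycle estimate then proceeds by a Borel--Cantelli argument. A point $x$ whose limit cocycle $|c_\phi(x)|$ exceeds a threshold of order $N_k$ must lie in $B_j$ for some $j \ge k$, and on $B_j$ the contribution is bounded by $O(N_j)$. Hence for any sublinear $\omega$ one gets
\[
\int \omega(|c_\phi|)\,d\mu \;\lesssim\; \sum_k \omega(N_k)\,\mu(B_k),
\]
and choosing $N_{k+1}/N_k$ to grow rapidly enough (depending on $\omega$) makes the series converge. To produce a single $\phi$ witnessing \emph{sub}-$L^1$-orbit equivalence, i.e.\ $\omega$-integrability of $c_\phi$ simultaneously for a rich enough family of sublinear $\omega$'s, I would diagonalize, arranging the growth of $(N_k)$ to dominate an enumerated exhausting sequence of sublinear gauges.

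The hardest step will be to drive $\mu(B_k)$ down far enough. The earlier $L^{<1/3}$ results cited in Theorem \ref{thm: prev results} arise because a naive greedy matching of the two tower structures forces $\mu(B_k)$ to scale as a positive power of $N_k/N_{k+1}$, capping the attainable integrability at a fixed Lebesgue exponent strictly less than $1$. To reach sub-$L^1$ one needs a genuinely more efficient combinatorial alignment---most likely exploiting the arithmetic of the supernatural numbers by matching $T$- and $S$-tower subcolumns along a large common factor of $N_{k+1}/N_k$, so that $\mu(B_k)$ decays faster than any polynomial in $N_k/N_{k+1}$ while the rerouted cocycle still admits a clean pointwise bound of order $N_k$ on $B_k$. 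Implementing and verifying this combinatorial matching is where I expect the bulk of the technical effort to lie.
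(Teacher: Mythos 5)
There is a genuine gap at the very first step of your setup: you assume that after ``passing to common refinements of the defining supernatural numbers'' both odometers admit clopen Rokhlin towers of a \emph{common} height $N_k$ with $N_k \mid N_{k+1}$ and $N_k \to \infty$. The clopen tower heights of an odometer are exactly the finite divisors of its supernatural number, so common heights exist only up to the gcd of the two supernatural numbers, which can be $1$ (e.g., the $2^\infty$- and $3^\infty$-odometers). In the only interesting case --- different supernatural numbers --- your tower-matching scheme therefore cannot even be initialized, and everything downstream (the trivial cocycle inside columns, the boundary sets $B_k$, the bound $\sum_k \omega(N_k)\mu(B_k)$) rests on this. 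The paper circumvents precisely this obstruction by \emph{interleaving} the two tower structures: it chooses $k_1 < k_2 < \cdots$ with $k_n \mid k_{n+2}$ (but not $k_n \mid k_{n+1}$), the odd-indexed heights approximating $X$ and the even-indexed ones $Y$, and builds back-and-forth maps $\phi_n \colon [k_{n+1}] \to [k_n]$ between towers of \emph{different} heights belonging to the two different systems, using Euclidean division $k_{n+1} = c\,k_{n-1}k_n + d$ to align blocks. The two alternating limits $\phi_e = \varprojlim \phi_{2n}$ and $\phi_o = \varprojlim \phi_{2n+1}$ are then shown to be mutually inverse, which is what makes the limit an actual measure isomorphism --- a point your one-directional scheme $\phi = \lim \phi_k$ also leaves unaddressed. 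You correctly flag that the ``combinatorial alignment'' is where the work lies, but that alignment is the content of the theorem, and you have not supplied it.

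Two smaller remarks. Your final cocycle estimate $\int \omega(|c_\phi|)\,d\mu \lesssim \sum_k \omega(N_k)\,\mu(B_k)$ is the right shape and matches the paper's bound (bad sets of measure $O(k_{n-1}k_n/k_{n+1})$ on which the cocycle is crudely bounded by the tower length, contributing $\tfrac{2}{k_{n+1}}\omega(k_nk_{n+1})$, which is summable for sublinear $\omega$ after choosing the $k_n$ to grow fast enough). However, the concluding diagonalization is both unnecessary and unworkable: the definition of sub-$L^1$-orbit equivalence used here quantifies over $\omega$ \emph{before} asking for the orbit equivalence, so one map per gauge suffices (this is exactly what Theorem~\ref{thm:main with omega} provides), and in any case there is no countable cofinal family of sublinear gauges to enumerate, so a single $\phi$ built by diagonalization over ``all'' sublinear $\omega$ is not available by this method.
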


In other words, within the class of odometers, integrable orbit equivalence is the same as conjugacy (which coincides here with flip-conjugacy), and any weaker integrability condition leads to a trivial relation. This generalizes the results of Kerr-Li and Correia in Theorem~\ref{thm: prev results}.

The strategy of proof for these results was to \emph{construct} a transformation $S \colon X \to X$ that lies in the full group of $T$, induces the same orbits as $T$, and such that the cocycle satisfies the prescribed integrability condition. The downside of this approach is that it is difficult to control what the system generated by $S$ is isomorphic to. That is why these results are, in a sense, one-directional: a given odometer is orbit equivalent either with \emph{some} other system or a very special odometer, the universal one. 

The approach that we take is loosely inspired by Keane and Smorodinsky's proof of Ornstein's theorem \cite{KS79}, and is specifically designed to take into account \emph{both} systems. The idea is to inductively construct back and forth maps between finite factors of the odometers such that the resulting diagram (almost) commutes (see Figure~\ref{fig: back and forth maps}). If these maps satisfy certain quantitative estimates, one can then take a limit and obtain an orbit equivalence between $\Zb \act X$ and $\Zb \act Y$ that satisfies the desired integrability condition. 

Although in this paper we only address the case of odometers (as those are precisely the systems that can be approximated by finite actions in a very strong sense), we believe that this framework is fairly flexible and could be used to obtain further results in this direction.

\subsection*{Acknowledgements}The authors are grateful to David Kerr, Corentin Correia, and François Le Maître for helpful comments and discussions.

The first named author was supported by the Dynasnet European Research Council Synergy project -- grant number ERC-2018-SYG 810115, as well as the Marvin V. and Beverly J. Mielke Endowed Fund through the Institute of Advanced Study. 

The second named author was supported by the Deutsche Forschungsgemeinschaft (DFG, German Research Foundation) under Germany’s Excellence Strategy EXC 2044-390685587, Mathematics M\"{u}nster: Dynamics–Geometry–Structure, by the SFB 1442 of the DFG, and by the Knut and Alice Wallenberg Foundation through a postdoc grant.

\section{Preliminaries}\label{sec: prelim}

\subsection{Quantitative orbit equivalence}\label{ssec: QOE}

Let $G, H$ be countable discrete groups and let $G \act X$ and $H \act Y$ be (essentially) free p.m.p. actions. A measure isomorphism $\phi \colon X \to Y$ is called an \emph{orbit equivalence} from $G$ to $H$\footnote{There is no actual directionality in this definition. Inserting it artificially will be useful later.} if 
\[
\phi(Gx) = H\phi(x)
\]
for almost every $x \in X$. 

To each such $\phi$ we associate the \emph{cocycles} 
\[
\lambda_\phi \colon G \times X \to H \quad \mbox{and} \quad \kappa_\phi \colon H \times Y \to G
\]
defined by
\[
\phi(gx) = \lambda_\phi(g, x)\phi(x) \quad \mbox{and} \quad \phi^{-1}(hy) = \kappa_\phi(h, y)\phi^{-1}(y)
\]
for every $g \in G$ and $h \in H$, and almost every $x \in X$ and $y \in Y$.

Assume now that $G$ and $H$ are finitely generated and equip them with the word-length metrics $\abs{\cdot}_G \colon G \to \Zb_{\ge 0}$ and $\abs{\cdot}_H \colon H \to \Zb_{\ge 0}$ corresponding to some choice of finite generating sets. Let $\omega_i \colon \Zb_{\ge 0} \to \Rb_{\ge 0},i\in\{1,2\}$ be non-decreasing functions.

\begin{definition}
We say that two actions $G \act X$ and $H \act Y$ are \emph{$(\omega_1,\omega_2)$-orbit equivalent} if there exists an orbit equivalence $\phi \colon X \to Y$ from $G$ to $H$ such that for every $g \in G$ and every $h \in H$, there exist constants $c_g, c_h>0$ with
\begin{equation}\label{eq: omega-OE def}
\int_X \omega_1\brak{\frac{\abs{\lambda_\phi(g, x)}_H}{c_g}}dx < \infty \quad \mbox{and} \quad \int_Y \omega_2\brak{\frac{\abs{\kappa_\phi(h, y)}_G}{c_h}}dy < \infty.
\end{equation}
When $\omega_1=\omega_2=\omega$, we simply call them \emph{$\omega$-orbit equivalent}. In particular, we say that the actions are
\begin{itemize}
    \item \emph{integrably orbit equivalent} if they are $\omega$-orbit equivalent for $\omega(n) = n$,
    \item \emph{$L^p$-orbit equivalent}, where $0 < p < \infty$, if they are $\omega$-orbit equivalent for $\omega(n) = n^p$,
    \item \emph{$L^{<p}$-orbit equivalent}, if they are $L^q$-orbit equivalent for all $0<q<p$.
    \item \emph{sub-$L^p$-orbit equivalent}, if they are $\omega$-orbit equivalent for all $\omega$ with \[
    \lim_n\frac{\omega(n)}{n^p}=0.
    \]
    When $p=1$, such an $\omega$ is called \emph{sublinear}.
\end{itemize}
\end{definition}

Note that $L^p$-orbit equivalence is stronger than $L^q$-orbit equivalence for $q < p$, and
sub-$L^p$-orbit equivalence is stronger than $L^{<p}$-orbit equivalence.

\begin{remark}\label{rem: Belinskaya optimal}
In terms of the above, Theorem~\ref{thm: Belinskaya} implies that if two $\Zb$-systems are not flip-conjugate, then they are at most sub-$L^{1}$-orbit equivalent.
\end{remark}

\begin{lemma}[{see \cite[Proposition~2.22]{DKLMT22}}]
Let $G \act X$ and $H \act Y$ be as above, and let $S \subseteq G$ and $R \subseteq H$ be finite generating sets. Then, to check that $\phi \colon X \to Y$ is an ($\omega_1,\omega_2$)-orbit equivalence, it is sufficient to check that \eqref{eq: omega-OE def} holds for all $g \in S$ and $h \in R$.
\end{lemma}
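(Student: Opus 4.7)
The plan is to iterate the cocycle identity together with subadditivity of the word length to dominate $\abs{\lambda_\phi(g,x)}_H$ by a sum of cocycle values of generators, and then use only the monotonicity of $\omega_1$ (plus a clever choice of $c_g$) to push $\omega_1$ inside that sum. The argument for $\kappa_\phi$ and $\omega_2$ is identical, so I focus on $\lambda_\phi$.

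First, I would extend the hypothesis from $S$ to $S\cup S^{-1}$. From $\phi(s\cdot s^{-1}x)=\phi(x)$ one gets $\lambda_\phi(s^{-1},x)=\lambda_\phi(s,s^{-1}x)^{-1}$, and since the word length is symmetric under inversion, $\abs{\lambda_\phi(s^{-1},x)}_H=\abs{\lambda_\phi(s,s^{-1}x)}_H$. Measure-preservation of the action of $s^{-1}$ then gives
\[
\int_X\omega_1\brak{\frac{\abs{\lambda_\phi(s^{-1},x)}_H}{c_s}}dx=\int_X\omega_1\brak{\frac{\abs{\lambda_\phi(s,y)}_H}{c_s}}dy<\infty,
\]
so we may set $c_{s^{-1}}:=c_s$ and put $C:=\max_{s\in S\cup S^{-1}}c_s$.

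Next, for $g\in G$ with $\abs{g}_G=n$, write $g=s_1s_2\cdots s_n$ with $s_i\in S\cup S^{-1}$. Iterating $\lambda_\phi(g_1g_2,x)=\lambda_\phi(g_1,g_2x)\lambda_\phi(g_2,x)$ together with subadditivity of $\abs{\cdot}_H$ yields
\[
\abs{\lambda_\phi(g,x)}_H\le\sum_{i=1}^{n}\abs{\lambda_\phi(s_i,s_{i+1}\cdots s_nx)}_H.
\]
Now set $c_g:=nC$. Dividing the previous display by $c_g$ makes the right-hand side an average of $n$ nonnegative terms, hence bounded by their maximum (each divided by $C$); since $\omega_1$ is non-decreasing and nonnegative,
\[
\omega_1\brak{\frac{\abs{\lambda_\phi(g,x)}_H}{c_g}}\le\omega_1\brak{\max_i\frac{\abs{\lambda_\phi(s_i,s_{i+1}\cdots s_nx)}_H}{C}}\le\sum_{i=1}^{n}\omega_1\brak{\frac{\abs{\lambda_\phi(s_i,s_{i+1}\cdots s_nx)}_H}{C}}.
\]
Integrating term by term and performing the measure-preserving change of variables $y=s_{i+1}\cdots s_nx$ in each integral reduces the finiteness of $\int_X\omega_1(\abs{\lambda_\phi(g,x)}_H/c_g)\,dx$ to the already-established integrability for each $s_i\in S\cup S^{-1}$ (where we use $C\ge c_{s_i}$ and monotonicity of $\omega_1$ once more to pass from $c_{s_i}$ to $C$).

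The only genuine obstacle is the apparent mismatch between having merely monotonicity of $\omega_1$ (no convexity or subadditivity assumed) and the need to pull $\omega_1$ through a sum; this is exactly what the choice $c_g=nC$ is designed to neutralize, by converting the subadditive bound on word length into an averaging bound before $\omega_1$ is applied. Once this trick is in place, everything else is bookkeeping and a change of variables.
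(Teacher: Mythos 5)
Your proof is correct and is essentially the standard argument behind the cited \cite[Proposition~2.22]{DKLMT22}, which the paper itself does not reproduce: iterating the cocycle identity, and then choosing $c_g=\abs{g}_G\,C$ to convert the subadditive word-length bound into an average (hence a maximum) before applying the merely non-decreasing $\omega_1$, is exactly the intended mechanism, and your reduction from $S$ to $S\cup S^{-1}$ via $\lambda_\phi(s^{-1},x)=\lambda_\phi(s,s^{-1}x)^{-1}$ and measure preservation is sound. The only (trivial) case you skip is $g=e$, where $\abs{g}_G=0$ would give $c_g=0$; there one just takes $c_e=1$, since the cocycle is identically the identity.
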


In particular, since we are interested in odometers, we will always work with the canonical generator. For the sake of brevity, we will also use the following.

\begin{definition}
Let $\omega\colon\Zb_{\geq0}\to\Rb_{\geq0}$ be non-decreasing, and let $\lambda\colon X\to\mathbb{Z}$ be a map. We define the \emph{$\omega$-norm} of $\lambda$ to be the integral
\[
    \int_X \omega(|\lambda(x)|)dx.
\]
\end{definition}

\subsection{\texorpdfstring{$\Zb$}{Z}-odometers}\label{ssec: odometers}
Let $k_n \in \Nb$ be an increasing sequence of natural numbers such that $k_n | k_{n+1}$ (that is, $k_n$ divides $k_{n+1}$). Consider the finite dynamical systems $\Zb \act \Zb/(k_n\Zb)$ (where the set $\Zb/(k_n\Zb)$ is equipped with the uniform probability measure) and note that the maps in Figure~\ref{fig: odometer maps}
\begin{figure}[ht]
\centering
\begin{tikzcd}[column sep=3cm]
\ldots&\arrow[l, "\Mod(k_{n-1})" description] \Zb/k_n\Zb & \arrow[l, "\Mod(k_n)" description] \Zb/k_{n+1}\Zb & \arrow[l, "\Mod(k_{n+1})" description]\ldots
\end{tikzcd}
\caption{Inverse system defining an odometer.}\label{fig: odometer maps}
\end{figure}
are both measure-preserving and $\Zb$-equivariant. The \emph{odometer} associated with the sequence $(k_n)_n$ is the inverse limit of the systems in Figure~\ref{fig: odometer maps}.

Now, let $q$ be a prime number and define 
\[
r_{q, n} = \max(r \in \Nb : q^r | k_n) \quad \mbox{and} \quad r_q = \lim_{n \to \infty}r_{q, n} \in \Nb \cup \{\infty\}.
\]
The sequence $r_q \in \Nb \cup \{\infty\}$ defines the \emph{supernatural number} associated with the sequence $(k_n)$, typically written in multiplicative form:
\[
2^{r_2}3^{r_3}5^{r_5}\ldots.
\]
Two sequences $(k_n)$ and $(k^\prime_n)$ produce isomorphic odometers if and only if their associated supernatural numbers are equal. See \cite{Dow05}.

\section{The main result}\label{sec: main proof}

Unless otherwise specified, finite sets are equipped with the uniform probability measure.

\begin{definition}\label{def: cocycle non-free}
We use the notation $[m]$ for the interval $\{0, 1, \ldots, m-1\}$ in $\Zb$. Note that it is the canonical set of representatives of the quotient $\Zb/m\Zb$ and hence can be identified with the latter. 

Let $\phi \colon [m] \to \Zb$ be an arbitrary map. We define its \emph{(generating) cocycle} to be the map $\lambda_\phi \colon [m] \to \Zb$ given by
\[
\lambda_\phi(x) = \phi((x+1) \Mod m) - \phi(x). 
\]
\end{definition}

\begin{lemma}\label{lem: constructing psi}
Set $k_{-1}=1$. Let $(k_n)_{n=0}^\infty$ be an increasing sequence of natural numbers such that $k_0=1$, $k_n | k_{n+2}$ for every $n \in \Nb$, and $k_{n+1} > k_{n-1}k_{n}$ for every $n \in \Nb$. Let $\omega \colon \Zb_{\ge 0} \to \Rb_{\ge 0}$ be a non-decreasing function. 

Then there exists a sequence of bijections
\[
\psi_n \colon [k_{n-1}k_{n}] \to [k_{n-1}k_{n}]
\]
that satisfy the following properties:
\begin{enumerate}
\item The diagram in Figure~\ref{fig: diagram for psi_n} is commutative up to a set of measure $k_{n-2}k_{n-1}/k_n$. More precisely, the set of elements $x \in [k_{n-1}k_n]$ such that
\[
\brak{\psi_{n-1}^{-1} \circ \Mod (k_{n-2}k_{n-1}) \circ \Mod (k_n)}(x) = \brak{\Mod(k_{n-2}k_{n-1}) \circ \psi_{n}} (x)
\]
has measure at least $1 - k_{n-2}k_{n-1}/k_n$.
\begin{figure}[ht]
\centering
\begin{tikzcd}[column sep=1.8cm,row sep=1cm]
& {[k_{n-1}k_n]}\arrow[rdd,"\psi_n" description]\arrow[ld,"\Mod(k_n)" description] & \\
{[k_n]}\arrow[dd,"\Mod(k_{n-2}k_{n-1})" description]& & \\
& & {[k_{n-1}k_n]}\arrow[ldd,"\Mod(k_{n-2}k_{n-1})" description] \\
{[k_{n-2}k_{n-1}]}\arrow[rd,"\psi_{n-1}^{-1}" description] & & \\
& {[k_{n-2}k_{n-1}]} &
\end{tikzcd}
\caption{Compatibility condition for $\psi_n$.}\label{fig: diagram for psi_n}
\end{figure}
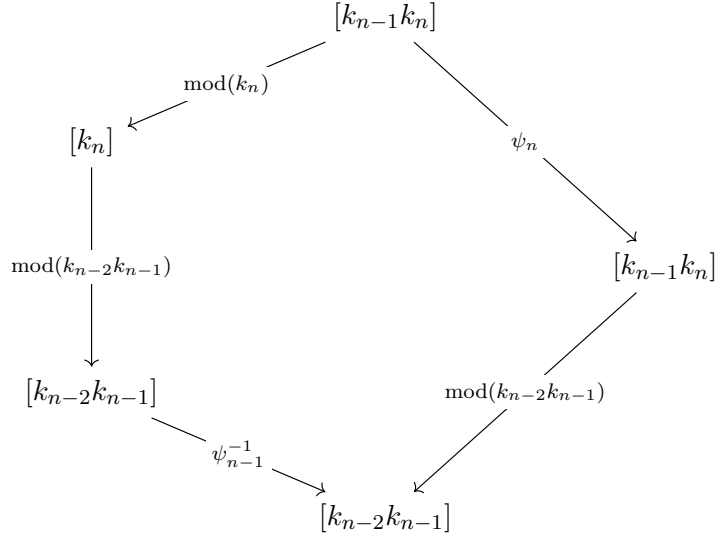

\item $\psi_n(0) = 0$ and $\psi_n(k_{n-1}k_n-1) = k_{n-1}k_n-1$.

\item The $\omega$-norms of the generating cocycles for $\psi_n$, $\psi_n^{-1}$, and $\Mod(k_n)\circ \psi_n^{-1}$ are at most
\[
\sum_{m=1}^{n}\left[\frac{2}{k_{m}}\omega(k_{m-1}k_{m})+\brak{\frac{1}{k_{m-2}k_{m-1}}+\frac{k_{m-2}k_{m-1}}{k_m}}\omega(1)\right].
\]
\end{enumerate}
\end{lemma}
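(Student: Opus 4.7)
The plan is to induct on $n$, with base case $\psi_0=\id$ on $[1]$. For the inductive step, perform the Euclidean division $k_n = q k_{n-2}k_{n-1} + r$ with $0\le r<k_{n-2}k_{n-1}$; the divisibility $k_{n-2}\mid k_n$ forces $k_{n-2}\mid r$. Partition $[k_{n-1}k_n]$ into the ``good'' set $G=\{x:x\Mod k_n<qk_{n-2}k_{n-1}\}$ and its complement $B$, whose measure is exactly $r/k_n\le k_{n-2}k_{n-1}/k_n$. Each $x\in G$ decomposes uniquely as $x=jk_n+ak_{n-2}k_{n-1}+b$ with $(j,a,b)\in[k_{n-1}]\times[q]\times[k_{n-2}k_{n-1}]$, and each $x\in B$ as $x=jk_n+qk_{n-2}k_{n-1}+b$ with $(j,b)\in[k_{n-1}]\times[r]$. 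I would define
\[
\psi_n(x)=\begin{cases}(jq+a)k_{n-2}k_{n-1}+\psi_{n-1}^{-1}(b)&\text{if }x\in G,\\ k_{n-1}qk_{n-2}k_{n-1}+jr+b&\text{if }x\in B.\end{cases}
\]
Since $(j,a)\mapsto jq+a$ is a bijection $[k_{n-1}]\times[q]\to[k_{n-1}q]$ and $\psi_{n-1}^{-1}$ is a bijection of $[k_{n-2}k_{n-1}]$, the map $\psi_n$ sends $G$ bijectively onto $[0,k_{n-1}qk_{n-2}k_{n-1})$ and $B$ bijectively onto $[k_{n-1}qk_{n-2}k_{n-1},k_{n-1}k_n)$, hence is a bijection of $[k_{n-1}k_n]$.

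Property (1) is immediate on $G$: both sides of the commutativity equation equal $\psi_{n-1}^{-1}(b)$. Property (2) is verified using the inductive boundary conditions $\psi_{n-1}^{-1}(0)=0$ and $\psi_{n-1}^{-1}(k_{n-2}k_{n-1}-1)=k_{n-2}k_{n-1}-1$: direct computation gives $\psi_n(0)=0$, and the last element $k_{n-1}k_n-1$, which lies in $G$ when $r=0$ and in $B$ when $r>0$, satisfies $\psi_n(k_{n-1}k_n-1)=k_{n-1}k_n-1$ in both cases.

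The bulk of the work is property (3). Writing $[k_{n-1}k_n]$ as the union of length-$k_n$ intervals $I_j=[jk_n,(j+1)k_n)$, each $I_j$ consists of $q$ ``good blocks'' of length $k_{n-2}k_{n-1}$ followed by a bad chunk of length $r$, and the construction acts as a $\psi_{n-1}^{-1}$-translate on each good block. I would classify the consecutive transitions $x\to x+1$ as: (i) within a good block, with cocycle $\lambda_{\psi_{n-1}^{-1}}(b)$; (ii) between consecutive good blocks in the same $I_j$, with cocycle $1$; (iii) inside the bad chunk of some $I_j$, with cocycle $1$; and (iv) at most $2k_{n-1}$ ``boundary'' transitions (good-to-bad inside an $I_j$, across consecutive $I_j$'s, or the wraparound), each with cocycle of absolute value at most $k_{n-1}k_n$. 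The type-(i) contribution is bounded by the $\omega$-norm of $\psi_{n-1}^{-1}$; the type-(ii) and type-(iii) contributions together are bounded by $\left(\frac{1}{k_{n-2}k_{n-1}}+\frac{k_{n-2}k_{n-1}}{k_n}\right)\omega(1)$; and the type-(iv) contribution is bounded by $\frac{2}{k_n}\omega(k_{n-1}k_n)$. Summing and invoking the inductive hypothesis yields the required bound for $\psi_n$. Analogous case analyses (invoking the $\omega$-norm of $\psi_{n-1}$ for the type-(i) terms) handle $\psi_n^{-1}$ and $\Mod(k_n)\circ\psi_n^{-1}$; for the latter, several of the would-be big jumps collapse to small ones because the modular reduction wraps them around.

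The main obstacle is the bookkeeping for $\psi_n^{-1}$, whose inverse block structure produces a handful of ``medium'' cocycles of size $r+1$ or $qk_{n-2}k_{n-1}+1$ that match neither the $\omega(1)$ nor the $\omega(k_{n-1}k_n)$ budget directly. These are absorbed into the $\frac{2}{k_n}\omega(k_{n-1}k_n)$ allowance via the monotone bound $\omega(\cdot)\le\omega(k_{n-1}k_n)$ and the fact that only $O(k_{n-1})$ of them occur, which comfortably fits within the budget for $2k_{n-1}$ boundary transitions.
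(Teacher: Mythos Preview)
Your proof is correct and follows essentially the same route as the paper's. Up to a harmless index shift (the paper builds $\psi_{n+1}$ from $\psi_n$ starting at $\psi_1=\id$, while you build $\psi_n$ from $\psi_{n-1}$ starting at $\psi_0=\id$, which then forces $\psi_1=\id$ as well) and a relabeling of the Euclidean-division variables, your definition of $\psi_n$ is identical to the paper's, and your four-case analysis of the cocycle (within a good block, between consecutive good blocks, inside a bad chunk, and $2k_{n-1}$ boundary transitions) matches the paper's estimate term by term. Your treatment of $\psi_n^{-1}$ and $\Mod(k_n)\circ\psi_n^{-1}$, absorbing the ``medium'' jumps into the crude $\omega(k_{n-1}k_n)$ budget since there are at most $2k_{n-1}$ of them, is exactly the paper's argument as well. (The observation that $k_{n-2}\mid r$ is correct but unused.)
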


\begin{proof}
We construct maps $\psi_n$ inductively, starting with $\psi_1 = \id$.

Assume $\psi_n$ is defined and let $x \in [k_nk_{n+1}]$. Let $x = ak_{n+1} + b$, let $k_{n+1} = ck_{n-1}k_n + d$, and let $b = ek_{n-1}k_n + f$ be Euclidean divisions. We define (see Figure~\ref{fig: definition of psi_n})
\[
\psi_{n+1}(x) = 
\begin{cases}
(ac+e)k_{n-1}k_n + \psi_n^{-1}(f), & e < c, \\
ck_nk_{n-1}k_n + ad + f, & e=c.
\end{cases}
\]
\begin{figure}[ht]
\centering
\begin{tikzpicture}[scale=0.3]
    \path (-1.5,0) (51.5,0);
	\draw[<->] (0,8) -- (25,8) node[above] {$k_nk_{n+1}$} -- (50,8);
	\draw[<->] (0,5) -- (6.5,5) node[above] {$k_{n+1}$} -- (13,5);
	\draw[<->] (0,2) -- (1,2) node[above] {$k_{n-1}k_n$} -- (2,2);
	\draw[<->] (12,2) -- (12.5,2) node[above] {$d$} -- (13,2);
	\draw[Green] (0,0) -- (6.8,0);
	\draw[Green] (9.2,0) -- (12,0);
    \draw[red] (12,0) -- (13,0);
    \draw[Green] (13,0) -- (19.8,0);
	\draw[Green] (22.2,0) -- (25,0);
    \draw[red] (25,0) -- (26,0);
    \draw[Green] (26,0) -- (29.5,0);
	\draw[Green] (33.5,0) -- (36,0);
    \draw[red] (36,0) -- (37,0);
    \draw[Green] (37,0) -- (43.8,0);
	\draw[Green] (46.2,0) -- (49,0);
    \draw[red] (49,0) -- (50,0);
	\foreach \x in {8, 21, 31.5, 45}{
		\node at (\x,0) {$\ldots$};
	}
	\draw (0,-1.5) -- (0,1.5);
	\draw (50,-1.5) -- (50,1.5);
	\foreach \x in {13,26,37}{
		\draw (\x,-1) -- (\x,1);
	}
	\foreach \y in {0,13,37}{
		\foreach \x in {2,4,6,10,12}{
			\draw (\y+\x,-0.5) -- (\y+\x,0.5);
		}
	}
	\draw[Green] (0,-10) -- (6.8,-10);
	\draw[Green] (9.2,-10) -- (18.8,-10);
	\draw[Green] (21.2,-10) -- (25,-10);
	\draw[Green] (29,-10) -- (36.8,-10);
	\draw[Green] (39.2,-10) -- (42,-10);
    \draw[red] (42,-10) -- (44.5,-10);
	\draw[red] (48.5,-10) -- (50,-10);
	\foreach \x in {8,20,27,38,46.5}{
		\node at (\x,-10) {$\ldots$};
	}
	\draw (0,-11.5) -- (0,-8.5);
	\draw (50,-11.5) -- (50,-8.5);
	\foreach \y in {0,12,30}\foreach \x in {0,2,4,6,10,12}{
		\draw (\y+\x,-10.5) -- (\y+\x,-9.5);
	}
	\draw (42,-11) -- (42,-9);
	\foreach \x in {43,44,49}{
		\draw (\x,-10.5) -- (\x,-9.5);
	}
	\foreach \x in {0,1}{
		\foreach \y in {1,3,5,11}{
			\draw[Green,->] (13*\x+\y,-0.5) .. controls (13*\x+\y,-5) and (12*\x+\y,-5) .. (12*\x+\y,-9.5);
		}
	}
	\foreach \x in {1,3,5,11}{
		\draw[Green,->] (37+\x,-0.5) .. controls (37+\x,-5) and (30+\x,-5) .. (30+\x,-9.5);
	}
	\draw[red,->] (12.5,-0.5) .. controls (12.5,-5) and (42.5,-5) .. (42.5,-9.5);
	\draw[red,->] (25.5,-0.5) .. controls (25.5,-5) and (43.5,-5) .. (43.5,-9.5);
	\draw[red,->] (49.5,-0.5) -- (49.5,-9.5);
	\draw[Green,->] (0,-13) -- (3,-13) node[right,black] {$:\psi_n^{-1}$};
	\draw[red,->] (0,-15.5) -- (3,-15.5) node[right,black] {$:\id_{[d]}$};
\end{tikzpicture}
\caption{Definition of $\psi_{n+1}$.}\label{fig: definition of psi_n}
\end{figure}
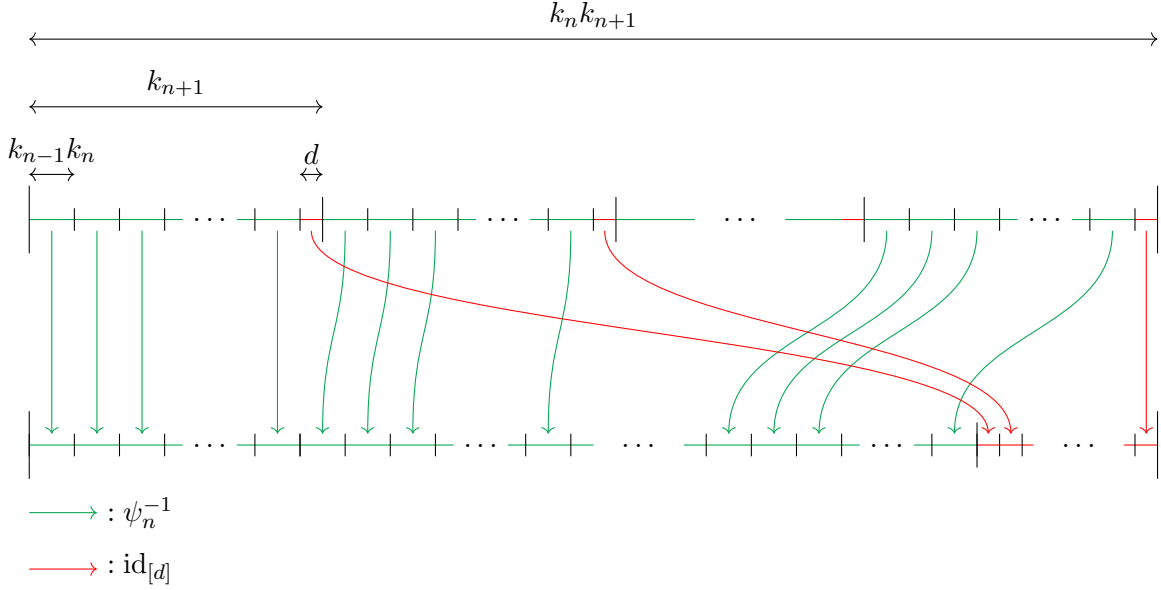

It remains to prove that properties $(1)-(3)$ are satisfied. Indeed, one can easily check that if $e < c$ (that is, if $x$ is in the green part of Figure~\ref{fig: definition of psi_n}) then the diagram in Figure~\ref{fig: diagram for psi_n} commutes. The measure of such points is equal to
\[
1 - \frac{d}{k_{n+1}} \ge 1 - \frac{k_{n-2}k_{n-1}}{k_n},
\]
which proves property $(1)$. Property $(2)$ is immediate from the definition.

To check property $(3)$, we first note that the cocycle for $\psi_{n+1}$ can be calculated explicitly in the following three cases (that is, when $x$ and $x+1$ have the same color in the top half of Figure~\ref{fig: definition of psi_n}):
\[
\abs{\lambda_{\psi_{n+1}}(x)} = 
\begin{cases}
\abs{\lambda_{\psi^{-1}_n}(f)}, & e<c\text{ and }f< k_{n-1}k_n-1, \\
1, & e < c-1\text{ and }f= k_{n-1}k_n-1, \\
1, & e = c\text{ and }f < d-1. \\
\end{cases}
\]
The first case is used to invoke the inductive hypothesis, while the second and third contribute the
\[
    \frac{1}{k_{n-1}k_n}\omega(1)+\frac{k_{n-1}k_n}{k_{n+1}}\omega(1)
\]
summand.

That leaves $2k_n$ points $x$ such that $x$ and $x+1$ have different colors in Figure~\ref{fig: definition of psi_n}. We denote this set of points by $E$. For $x \in E$ simply estimate $\abs{\lambda_{\psi_{n+1}}(x)}$ by the length of the interval, which contributes the
\[
\frac{2}{k_{n+1}}\omega(k_nk_{n+1})
\]
summand, finishing the argument for $\psi_{n+1}$.

It is a matter of straightforward calculation for one to check that the same $\omega$-norm estimate holds for the cocycle of $\psi_{n+1}^{-1}$. Note finally that 
\[
\lambda_{\psi_{n+1}^{-1}}(y) = \lambda_{\Mod (k_{n+1}) \circ \psi_{n+1}^{-1}}(y),
\]
unless $\psi_{n+1}^{-1}(y) \in E$. The latter case corresponds exactly to the points for which we used the whole length of the interval $[k_nk_{n+1}]$ as a very crude estimate. Therefore, since the range interval of $\Mod (k_{n+1})\circ \psi_{n+1}^{-1}$ is even smaller, the same crude estimate certainly holds.

\end{proof}

\begin{definition}
We define $\phi_n \colon [k_{n+1}] \to [k_n]$ to be the composition
\[
\phi_n = \Mod (k_n)\circ \psi_n^{-1} \circ \Mod (k_{n-1}k_n).
\]
\end{definition}

\begin{lemma}\label{lem: prop of phi}
The maps $(\phi_n)_n$ satisfy the following properties:
\begin{enumerate}[(i)]
\item $\phi_n \circ \phi_{n+1} = \Mod (k_n) \colon [k_{n+2}] \to [k_n]$ up to a set of measure at most 
\[
\frac{k_{n-1}k_n}{k_{n+1}} + \frac{k_nk_{n+1}}{k_{n+2}}.
\]
\item For all sets $A \subseteq [k_n]$ we have that 
\[
\mu_{n+1}(\phi_n^{-1}(A)) \le \brak{1 + \frac{k_{n-1}k_n}{k_{n+1}}}\mu_n(A),
\]
where $\mu_n$ and $\mu_{n+1}$ are the uniform probability measures on $[k_{n+1}]$ and $[k_n]$ respectively.
\item $\lambda_{\phi_n}(x)=\lambda_{\Mod (k_n) \circ \psi_n^{-1}}(x\Mod(k_{n-1}k_n))$ and therefore the $\omega$-norm of $\lambda_{\phi_n}$ is at most
\[
\sum_{m=1}^{n}\left[\frac{2}{k_{m}}\omega(k_{m-1}k_{m})+\brak{\frac{1}{k_{m-2}k_{m-1}}+\frac{k_{m-2}k_{m-1}}{k_m}}\omega(1)\right] + \frac{k_{n-1}k_n}{k_{n+1}}\omega(k_n).
\]
\end{enumerate}
\end{lemma}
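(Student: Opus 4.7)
The plan is to prove all three properties by unfolding the definition
\[
\phi_n=\Mod(k_n)\circ\psi_n^{-1}\circ\Mod(k_{n-1}k_n)
\]
and reducing each statement to the corresponding property of $\psi_n$ established in Lemma~\ref{lem: constructing psi}.

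For (i), I would write out $\phi_n\circ\phi_{n+1}$ explicitly and, for $x\in[k_{n+2}]$, set $y=\Mod(k_nk_{n+1})(x)\in[k_nk_{n+1}]$. Rewriting Lemma~\ref{lem: constructing psi}(1) applied to $\psi_{n+1}$ by substituting $z=\psi_{n+1}(\cdot)$ yields
\[
\psi_n^{-1}\brak{\Mod(k_{n-1}k_n)\brak{\Mod(k_{n+1})\brak{\psi_{n+1}^{-1}(y)}}}=\Mod(k_{n-1}k_n)(y)
\]
on a subset of $[k_nk_{n+1}]$ of measure at least $1-k_{n-1}k_n/k_{n+1}$; the outer $\Mod(k_n)$ then reduces this to $\Mod(k_n)(y)=\Mod(k_n)(x)$, since $k_n$ divides both $k_{n-1}k_n$ and $k_nk_{n+1}$. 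Pulling the exceptional set back to $[k_{n+2}]$ via $\Mod(k_nk_{n+1})$ introduces an additional $k_nk_{n+1}/k_{n+2}$ term, because $k_nk_{n+1}$ need not divide $k_{n+2}$ and so this map has fibers of size $\floor{k_{n+2}/(k_nk_{n+1})}$ or $\ceil{k_{n+2}/(k_nk_{n+1})}$; combining the two contributions gives the stated bound.

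Property (ii) is a direct cardinality count. Since $\psi_n$ is a bijection and $\Mod(k_n)^{-1}(A)\subseteq[k_{n-1}k_n]$ has exactly $k_{n-1}\abs{A}$ elements, the set $B:=\psi_n(\Mod(k_n)^{-1}(A))$ has $\abs{B}=k_{n-1}\abs{A}$. Its preimage in $[k_{n+1}]$ under $\Mod(k_{n-1}k_n)$ has size at most $\abs{B}\ceil{k_{n+1}/(k_{n-1}k_n)}\le k_{n-1}\abs{A}\brak{k_{n+1}/(k_{n-1}k_n)+1}$, and dividing by $k_{n+1}$ gives the stated inequality.

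For (iii), I would verify the cocycle identity by a short case analysis on $x\in[k_{n+1}]$: for $x<k_{n+1}-1$ the identity is exact (even when $x+1$ is a nontrivial multiple of $k_{n-1}k_n$, since then both sides evaluate $\psi_n^{-1}$ at $0$), and it can fail only at the single point $x=k_{n+1}-1$, where we use the crude bound $\abs{\lambda_{\phi_n}(k_{n+1}-1)}\le k_n$. To bound the $\omega$-norm, I rewrite the average over $x\in[k_{n+1}]$ as an average over $y\in[k_{n-1}k_n]$, noting that each $y$ has at most $\ceil{k_{n+1}/(k_{n-1}k_n)}$ preimages; combining this with the estimate on $\lambda_{\Mod(k_n)\circ\psi_n^{-1}}$ from Lemma~\ref{lem: constructing psi}(3) and adding the boundary contribution produces the stated bound.

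The main obstacle is purely combinatorial bookkeeping. The hypothesis on $(k_n)$ only ensures $k_n\mid k_{n+2}$, not $k_n\mid k_{n+1}$, so the reductions $\Mod(k_{n-1}k_n)\colon[k_{n+1}]\to[k_{n-1}k_n]$ and $\Mod(k_nk_{n+1})\colon[k_{n+2}]\to[k_nk_{n+1}]$ are never exactly measure preserving; each introduces a small non-uniformity of order $k_{n-1}k_n/k_{n+1}$ or $k_nk_{n+1}/k_{n+2}$. Matching the specific error terms in the statement requires tracking all of these corrections together with the single-point boundary artifact from the cyclic nature of the cocycle.
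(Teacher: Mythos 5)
Your proposal is correct and follows essentially the same route as the paper, whose proof simply notes that all three properties follow from the definition of $\phi_n$ together with Lemma~\ref{lem: constructing psi} and ``another crude estimate for the last $k_{n+1}\Mod(k_{n-1}k_n)$ points of $[k_{n+1}]$.'' Your observation that the cocycle identity in (iii) can fail only at the single wrap-around point $x=k_{n+1}-1$ is slightly sharper bookkeeping than the paper's (which applies the crude bound to the entire final partial block), but both yield the stated estimate.
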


\begin{proof}
Property (i) is straightforward to check using condition $(1)$ of Lemma~\ref{lem: constructing psi}, while (ii) and the fact that 
\[
\lambda_{\phi_n}(x)=\lambda_{\Mod (k_n) \circ \psi_n^{-1}}(x\Mod(k_{n-1}k_n))
\]
follow from the definition and noticing that $\phi_n$ is at most $k_{n-1}\ceil{k_{n+1}/(k_nk_{n-1})}$-to-$1$. Finally, the estimate for the $\omega$-norm of $\lambda_{\phi_n}$ is clear from property $(3)$ of Lemma~\ref{lem: constructing psi} and another crude estimate for the last $k_{n+1}\Mod(k_{n-1}k_n)$ points of the interval $[k_{n+1}]$.
\end{proof}

\begin{theorem}\label{thm:main with omega}
Let $\Zb \act X$ and $\Zb \act Y$ be two odometers.  Let $\omega \colon \Zb_{\ge 0} \to \Rb_{\ge 0}$ be a non-decreasing sublinear function. Then $\Zb \act X$ and $\Zb \act Y$ are $\omega$-orbit equivalent. In fact, the $\omega$-norm of the cocycle can be made to be less than $\omega(1)+\delta$ for any $\delta > 0$.
\end{theorem}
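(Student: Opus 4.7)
The plan is to realize $X$ and $Y$ as inverse limits along two interleaved subsequences of a single master sequence $(k_n)_n$, and to use the maps $\phi_n \colon [k_{n+1}] \to [k_n]$ of Lemma~\ref{lem: prop of phi} as bridges between them. Concretely, I fix defining sequences $a_m \mid a_{m+1}$ for $X$ and $b_m \mid b_{m+1}$ for $Y$, then recursively build the master sequence so that $k_{2n+1}$ is a cofinal subsequence of $(a_m)$ and $k_{2n}$ is a cofinal subsequence of $(b_m)$. These cofinal choices provide the flexibility needed to simultaneously enforce the hypotheses of Lemma~\ref{lem: constructing psi} (namely $k_n \mid k_{n+2}$ and $k_{n+1} > k_{n-1}k_n$), summability of the errors $k_{n-1}k_n/k_{n+1}$ appearing in Lemma~\ref{lem: prop of phi}(i), and the requirement that the estimate in Lemma~\ref{lem: prop of phi}(iii) sums to less than $\omega(1)+\delta$ in the limit.

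Having fixed the $\phi_n$'s, I identify $X$ with the inverse limit of $([k_{2n+1}])_n$ along the natural $\Mod$ maps, and $Y$ with the inverse limit of $([k_{2n}])_n$. For $\xi \in X$ with coordinates $\xi_{2n+1} \in [k_{2n+1}]$, I declare $\Phi(\xi)_{2n} := \phi_{2n}(\xi_{2n+1})$. The consistency identity $\Mod(k_{2n}) \circ \phi_{2n+2} = \phi_{2n} \circ \Mod(k_{2n+1})$ required for $(\Phi(\xi)_{2n})_n$ to be a legitimate point of $Y$ follows, up to a set of measure $O(k_{2n-1}k_{2n}/k_{2n+1} + k_{2n}k_{2n+1}/k_{2n+2} + k_{2n+1}k_{2n+2}/k_{2n+3})$, from two applications of Lemma~\ref{lem: prop of phi}(i). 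By Borel--Cantelli, summability of these errors guarantees that for almost every $\xi$ the identity fails at only finitely many levels, so the tail of $(\Phi(\xi)_{2n})$ is compatible and defines an element of $Y$.

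Next I verify that $\Phi$ is a measure-preserving orbit equivalence. Measure preservation follows by passing to the limit in Lemma~\ref{lem: prop of phi}(ii), since our choices make $\prod_n (1 + k_{n-1}k_n/k_{n+1})$ converge. Bijectivity on a conull set comes from running the symmetric construction with odd-indexed $\phi$'s to produce a candidate inverse $\Phi^{-1} \colon Y \to X$ and noting, via another Borel--Cantelli argument, that $\Phi^{-1} \circ \Phi = \id$ on a full-measure set. For the orbit relation, I observe that the generator of $X$ acts on the factor $[k_{2n+1}]$ by adding $1$ modulo $k_{2n+1}$, and $\phi_{2n}$ transports this to the shift by $\lambda_{\phi_{2n}}(\xi_{2n+1})$ on $[k_{2n}]$. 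Since $|\lambda_{\phi_{2n}}(\xi_{2n+1})| < k_{2n}$ and the cocycle of $\Phi$ at $\xi$ is congruent to $\lambda_{\phi_{2n}}(\xi_{2n+1})$ modulo $k_{2n}$, the two must agree for $n$ large enough (depending on $\xi$), so $\lambda_\Phi(\xi) = \lim_n \lambda_{\phi_{2n}}(\xi_{2n+1})$ as integers for a.e.\ $\xi$.

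The final step is the $\omega$-norm estimate. I apply Lemma~\ref{lem: prop of phi}(iii) and use Fatou's lemma to push the bound through the limit. The $m=1$ summand (with $k_{-1}=k_0=1$) contributes $\omega(1) + O(\omega(k_1)/k_1)$, while each $m \geq 2$ summand contributes $2\omega(k_{m-1}k_m)/k_m + (k_{m-2}k_{m-1}/k_m + 1/(k_{m-2}k_{m-1}))\omega(1)$. Since $\omega$ is sublinear, $\omega(k_{m-1}x)/x \to 0$ as $x \to \infty$ for any fixed $k_{m-1}$, so choosing $k_m$ large enough at each stage makes each summand at most $\delta \cdot 2^{-m}$; the grand total is thus at most $\omega(1)+\delta$. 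The main obstacle is the combinatorial bookkeeping: each $k_n$ must be a cofinal element of the appropriate odometer sequence \emph{and} satisfy several rapidly-growing inequalities depending on the previous choices and on $\omega$. This is delicate but essentially routine, facilitated by the fact that any cofinal subsequence of $(a_m)$ or $(b_m)$ defines the same odometer.
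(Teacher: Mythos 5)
Your proposal is correct and follows essentially the same route as the paper: interleave a single master sequence $(k_n)$ whose odd and even subsequences are cofinal in defining sequences for $X$ and $Y$, use the $\phi_n$ of Lemma~\ref{lem: prop of phi} as bridges, apply Borel--Cantelli to the errors from part (i) to get well-defined mutually inverse limit maps, and bound the $\omega$-norm of the limiting cocycle via part (iii) together with the sublinearity of $\omega$. The bookkeeping you flag as the main obstacle is exactly what the paper disposes of by inductively choosing the $k_n$ so that the two error series each sum to less than $\delta/3$.
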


\begin{proof}
Let the odometers $X$ and $Y$ be given. Inductively choose integers
\[
1<k_1< k_2< \ldots<\allowbreak k_n< \ldots
\]
such that $k_n | k_{n+2}$ and
\begin{enumerate}[I.]
    \item
    \[
    \varprojlim(\Zb/k_{2n+1}\Zb)= X,\ \varprojlim(\Zb/k_{2n}\Zb) = Y,
    \]
    \item 
    \[
    \sum_{n=0}^\infty \frac{k_{n-1}k_n}{k_{n+1}}\omega(k_n) < \frac{\delta}{3},
    \]
    \item 
    \[
    \sum_{n=1}^\infty \frac{2}{k_n}\omega(k_{n-1}k_n) < \frac{\delta}{3}.
    \]
\end{enumerate}

Let $\phi_n$ be the functions constructed from this sequence. Properties (i) and (ii) of Lemma~\ref{lem: prop of phi} imply that there are almost everywhere defined limit functions (see Figure~\ref{fig: back and forth maps})
\[
    \phi_e = \varprojlim \phi_{2n} \colon X \to Y,\ \phi_o = \varprojlim \phi_{2n+1} \colon Y \to X,
\]
both of which are measure non-increasing and such that
\[
\phi_o \circ \phi_e = id_X,\ \phi_e \circ \phi_o = id_Y.
\]
This implies that both of them are, in fact, measure isomorphisms between $X$ and $Y$. 

\begin{figure}[ht]
\centering
\begin{tikzcd}[column sep=2cm,row sep=0.5cm]
\Zb/k_1\Zb &\\
& \arrow[ul,"\phi_1" description] \Zb/k_2\Zb\\
\Zb/k_3\Zb \arrow[uu, "\Mod(k_1)" description]\arrow[ur, "\phi_2" description]&\\
& \Zb/k_4\Zb \arrow[ul, "\phi_3" description] \arrow[uu,"\Mod(k_2)" description]\\
\Zb/k_5\Zb \arrow[uu, "\Mod(k_3)" description]\arrow[ur,"\phi_4" description] & \\
\vdots\arrow[u] &\vdots\arrow[uu,"\Mod(k_4)" description]\\
X\arrow[u]\arrow[r,"\phi_e" below,shift right]& Y\arrow[u]\arrow[l,"\phi_o" above,shift right]
\end{tikzcd}
\caption{Almost commutative diagram defining $\phi_o$ and $\phi_e$.}\label{fig: back and forth maps}
\end{figure}
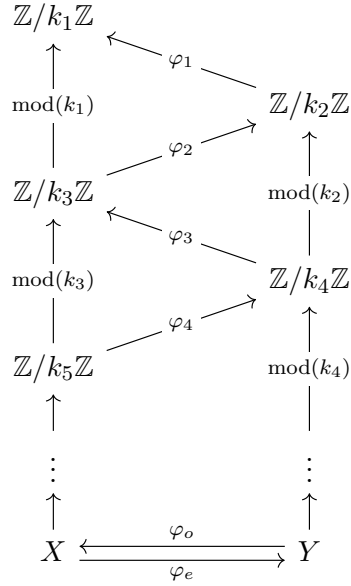

The cocycle for $\phi_e$ (resp. $\phi_o$) is a.e. the pointwise limit of the cocycles for $\phi_{2n}\circ \pi_{2n}$ (resp. $\phi_{2n+1}\circ \pi_{2n+1}$), where $\pi_n$ is the projection onto the corresponding finite factor. Hence, by Borel-Cantelli and property (iii) of Lemma~\ref{lem: prop of phi}, its $\omega$-norm is at most
\[
\sum_{m=1}^{\infty}\left[\frac{2}{k_{m}}\omega(k_{m-1}k_{m})+\frac{1}{k_{m-2}k_{m-1}}\omega(1)+\frac{k_{m-2}k_{m-1}}{k_m}\omega(1)\right]<\omega(1)+\delta,
\]
which finishes the proof.
\end{proof}

We have shown that any two $\Zb$-odometers are, in view of Remark~\ref{rem: Belinskaya optimal}, orbit equivalent in the strongest sense possible.

On the other hand, it is known that any free action of $\Zb^n$ and any free action of $\Zb^m$ are at best sub-$(L^{m/n},L^{n/m})$-orbit equivalent (defined in the obvious way), and that the corresponding \emph{dyadic} odometers are $(L^{<m/n},L^{<n/m})$-orbit equivalent \cite{DKLMT22,Cor25c}. It is therefore natural to ask:

\begin{problem}\label{prob: Z^n}
Let $\Zb^n \act X$ and $\Zb^m \act Y$ be odometers of $\Zb^n$ and $\Zb^m$, respectively. Are they necessarily sub-($L^{m/n},L^{n/m})$-orbit equivalent?
\end{problem}

\end{document}